\providecommand{\U}[1]{\protect \rule{.1in}{.1in}}
\newtheorem{theorem}{Theorem}[section]
\newtheorem{corollary}[theorem]{Corollary}
\newtheorem{lemma}[theorem]{Lemma}
\theoremstyle{definition}
\theoremstyle{example}
\theoremstyle{remark}
\newtheorem{remark}[theorem]{Remark}
\begin{document}
\title[Partial sums of generalized Rabotnov function]{Partial sums of generalized Rabotnov function}
\author[B. A. Frasin]{Basem Aref Frasin}
\address{Department of Mathematics, Faculty of Science, Al al-Bayt University, Mafraq,
Jordan. \\
ORCID:https://orcid.org/0000-0001-8608-8063 }
\email{bafrasin@yahoo.com}

\begin{abstract}
Let $(\mathbb{R}_{\alpha,\beta,\gamma}(z))_{m}(z)=z+\sum_{n=1}^{m}A_{n}%
z^{n+1}$ be the sequence of partial sums of the normalized Rabotnov function
$\mathbb{R}_{\alpha,\beta,\gamma}(z)=z+\sum_{n=1}^{\infty}A_{n}z^{n+1}$ where
$A_{n}=\frac{\beta^{n}\Gamma \left(  \gamma+\alpha \right)  }{\Gamma \left(
\left(  \gamma+\alpha \right)  (n+1)\right)  }.$ The purpose of the present
paper is to determine lower bounds for $\mathfrak{R}\left \{  \frac
{\mathbb{R}_{\alpha,\beta,\gamma}(z)}{(\mathbb{R}_{\alpha,\beta,\gamma}%
)_{m}(z)}\right \}  ,\mathfrak{R}\left \{  \frac{(\mathbb{R}_{\alpha
,\beta,\gamma})_{m}(z)}{\mathbb{R}_{\alpha,\beta,\gamma}(z)}\right \}  ,$

$\mathfrak{R}\left \{  \frac{\mathbb{R}_{\alpha,\beta,\gamma}(z)}%
{(\mathbb{R}_{\alpha,\beta,\gamma})_{m}^{\prime}(z)}\right \}  ,\mathfrak{R}%
\left \{  \frac{(\mathbb{R}_{\alpha,\beta,\gamma})_{m}^{\prime}(z)}%
{\mathbb{R}_{\alpha,\beta,\gamma}(z)}\right \}  .$ Furthermore, we give lower
bounds for $\mathfrak{R}\left \{  \frac{\mathbb{I}\left[  \mathbb{R}%
_{\alpha,\beta,\gamma}\right]  (z)}{(\mathbb{I}\left[  \mathbb{R}%
_{\alpha,\beta,\gamma}\right]  )_{m}(z)}\right \}  $ and $\mathfrak{R}\left \{
\frac{(\mathbb{I}\left[  \mathbb{R}_{\alpha,\beta,\gamma}\right]  )_{m}%
(z)}{\mathbb{I}\left[  \mathbb{R}_{\alpha,\beta,\gamma}\right]  (z)}\right \}
$ where $\mathbb{I}\left[  \mathbb{R}_{\alpha,\beta,\gamma}\right]  $ is the
Alexander transform of $\mathbb{R}_{\alpha,\beta,\gamma}$. Several examples of
the main results are also considered.

\textbf{Mathematics Subject Classification} (2020): \textbf{\ }30C45.

\textbf{Keywords}: Partial sums, analytic functions, Rabotnov function.

\end{abstract}
\maketitle

\section{Introduction and preliminaries}

In 1948, Rabotnov \cite{rab} introduced a special function applied in
viscoelasticity. This function, known today as the Rabotnov fractional
exponential function or briefly Rabotnov function, is defined as
follows$\allowbreak$%
\begin{equation}
R_{\alpha,\beta}(z)=z^{\alpha}\sum \limits_{n=0}^{\infty}\frac{(\beta
)^{n}z^{n\left(  1+\alpha \right)  }}{\Gamma(\left(  n+1\right)  \left(
1+\alpha \right)  )},\quad(\alpha,\beta,z\in \mathbb{C}\;). \label{rand}%
\end{equation}
Rabotnov function is the particular case of the familiar Mittag-Leffler
function \cite{mit} widely used in the solution of fractional order integral
equations or fractional order differential equations. The relation between the
Rabotnov function and Mittag-Leffler function can be written as follows%

\[
R_{\alpha,\beta}(z)=z^{\alpha}E_{1+\alpha,1+\alpha}(\beta z^{1+\alpha}),
\]
where $E$ is Mittag-Leffler function and $\alpha,\beta,z\in \mathbb{C}$.
Several properties of Mittag-Leffler function and generalized Mittag-Leffler
function can be found in \cite{frtr,at,ban,mur,ga}.

A generalization of the Rabotnov function $R_{\alpha,\beta}$ is given by%

\begin{equation}
R_{\alpha,\beta,\gamma}(z)=z^{\alpha}\sum \limits_{n=0}^{\infty}\frac
{(\beta)^{n}z^{n\left(  \gamma+\alpha \right)  }}{\Gamma(\left(  n+1\right)
\left(  \gamma+\alpha \right)  )},\quad(\alpha,\beta,\gamma,z\in \mathbb{C}\;).
\label{gg}%
\end{equation}
Let $\mathcal{A}$ denote the class of functions of the form%

\begin{equation}
f(z)=z+\sum \limits_{n=2}^{\infty}a_{n}z^{n} \label{b1}%
\end{equation}
which are analytic in the open unit disc $\mathfrak{U}=\left \{  z:\left \vert
z\right \vert <1\right \}  $ and hold the normalization condition
$f(0)=f^{\prime}(0)-1=0.$ Further, by $\mathcal{S}$ we shall denote the class
of all functions in $\mathcal{A}$ which are univalent in $\mathfrak{U}$.

The Alexander transform $\mathbb{I}[f]:\mathfrak{U}\rightarrow \mathbb{C}$ of
$f$ \ is defined by \cite{alx}%
\[
\mathbb{I}[f]=%
%TCIMACRO{\dint \limits_{0}^{z}}%
%BeginExpansion
{\displaystyle \int \limits_{0}^{z}}
%EndExpansion
\frac{f(t)}{t}dt=z+\sum \limits_{n=2}^{\infty}\frac{a_{n}}{n}z^{n}.
\]
It is clear that the Rabotnov function $R_{\alpha,\beta,\gamma}(z)$ does not
belong to the family $\mathcal{A}$. Thus, it is natural to consider the
following normalization of Rabotnov function%
\begin{align}
\mathbb{R}_{\alpha,\beta,\gamma}(z)  &  =z^{\frac{\gamma}{\gamma+\alpha}%
}\Gamma \left(  \gamma+\alpha \right)  R_{\alpha,\beta,\gamma}(z^{\frac
{1}{\gamma+\alpha}})\nonumber \\
&  =z+\sum \limits_{n=1}^{\infty}\frac{\beta^{n}\Gamma \left(  \gamma
+\alpha \right)  }{\Gamma \left(  \left(  \gamma+\alpha \right)  (n+1)\right)
}z^{n+1},\;z\in \mathfrak{U}. \label{GT}%
\end{align}
Geometric properties including starlikeness, convexity and close-to-convexity
for the normalized Rabotnov function $\mathbb{R}_{\alpha,\beta,1}(z)$ were
recently investigated by Eker and Ece in \cite{ek}. A new class of normalized
analytic functions and bi-univalent functions associated with the normalized
Rabotnov function $\mathbb{R}_{\alpha,\beta,1}(z)$ was also introduced and
studied by Amourah et al.\cite{am}.

\begin{remark}
\label{rem1}The function $\mathbb{R}_{\alpha,\beta,\gamma}(z)$ contains many
well known functions as its special case, for example:
\end{remark}

$\left \{
\begin{array}
[c]{c}%
\mathbb{R}_{0,-\frac{1}{3},1}(z)=ze^{-\frac{z}{3}}%
,\  \  \  \  \  \  \  \  \  \  \  \  \  \\
\mathbb{R}_{1,\frac{1}{2},1}(z)=\sqrt{2z}\sinh \sqrt{\frac{z}{2}},\\
\mathbb{R}_{1,-\frac{1}{4},1}(z)=2\sqrt{z}\sin \frac{\sqrt{z}}{2},\\
\mathbb{R}_{1,1,1}(z)=\sqrt{z}\sinh \sqrt{z},\  \  \  \  \  \  \\
\mathbb{R}_{1,2,1}(z)=\frac{1}{2}\sqrt{2z}\sinh \sqrt{2z}.
\end{array}
\right.  $\noindent

\bigskip The concept of finding the lower bound of the real part of the ratio
of the partial sum of analytic functions to its infinite series sum was
introduced firstly by Silvia \cite{siv}. Silverman in \cite{sil} found the
partial sums of convex and starlike functions by developed more useful
techniques. After that, several researchers investigated such partial sums for
different subclasses of analytic functions. For more work on partial sums, the
interested readers are referred to \cite{bri,fr1,fr2,mur,lin,or3,owa,sh}.

Recently, some researchers have studied on partial sums of special functions.
For example, Orhan and Yagmur in \cite{yag} determined lower bounds for the
normalized Struve functions to its sequence of partial sums. Some lower bounds
for the quotients of normalized Dini functions and their partial sum, as well
as for the quotients of the derivative of normalized Dini functions and their
partial sums were obtained by Akta\c{s} and Orhan in \cite{ak}. Din et al.
\cite{din} found the partial sums of two kinds normalized Wright functions and
the partial sums of Alexander transform of these normalized Wright functions.
\bigskip Kaz\i mo\u{g}lu in \cite{kaz} studied the partial sums of the
normalized Miller-Ross function. Kaz\i mo\u{g}lu and Deniz \cite{kaz2}
determined lower bounds for the normalized Rabotnov function $\mathbb{R}%
_{\alpha,\beta,1}(z)$ to its sequence of partial sums.

Let $(\mathbb{R}_{\alpha,\beta,\gamma}(z))_{m}(z)=z+\sum_{n=1}^{m}\frac
{\beta^{n}\Gamma \left(  \gamma+\alpha \right)  }{\Gamma \left(  \left(
\gamma+\alpha \right)  (n+1)\right)  }z^{n+1},m\in \mathbb{N=\{}1,2,3,...\},$ be
the sequence of partial sums of normalized Rabotnov function $\mathbb{R}%
_{\alpha,\beta,\gamma}(z)$ given by (\ref{GT}) and for $m=0$, we have
$(\mathbb{R}_{\alpha,\beta,\gamma}(z))_{0}(z)=z$.

The aim of the present paper is to determine the lower bounds of%

\begin{align*}
&  \mathfrak{R}\left \{  \frac{\mathbb{R}_{\alpha,\beta,\gamma}(z)}%
{(\mathbb{R}_{\alpha,\beta,\gamma})_{m}(z)}\right \}  ,\mathfrak{R}\left \{
\frac{(\mathbb{R}_{\alpha,\beta,\gamma})_{m}(z)}{\mathbb{R}_{\alpha
,\beta,\gamma}(z)}\right \}  ,\mathfrak{R}\left \{  \frac{\mathbb{R}%
_{\alpha,\beta,\gamma}^{\prime}(z)}{(\mathbb{R}_{\alpha,\beta,\gamma}%
)_{m}^{\prime}(z)}\right \}  ,\mathfrak{R}\left \{  \frac{(\mathbb{R}%
_{\alpha,\beta,\gamma})_{m}^{\prime}(z)}{\mathbb{R}_{\alpha,\beta,\gamma
}^{\prime}(z)}\right \}  ,\\
&  \mathfrak{R}\left \{  \frac{\mathbb{I}\left[  \mathbb{R}_{\alpha
,\beta,\gamma}\right]  (z)}{(\mathbb{I}\left[  \mathbb{R}_{\alpha,\beta
,\gamma}\right]  )_{m}(z)}\right \}  ,\mathfrak{R}\left \{  \frac{(\mathbb{I}%
\left[  \mathbb{R}_{\alpha,\beta,\gamma}\right]  )_{m}(z)}{\mathbb{I}\left[
\mathbb{R}_{\alpha,\beta,\gamma}\right]  (z)}\right \}  ,
\end{align*}
where $\mathbb{I}\left[  \mathbb{R}_{\alpha,\beta,\gamma}\right]  $ is the
Alexander transform of $\mathbb{R}_{\alpha,\beta,\gamma}$.

Throughout this paper, we shall restrict our attention to the case of
real-valued $\alpha \geq0,\gamma \geq1,$ $\beta \in \mathbb{C}$ and $z\in
\mathfrak{U}$.

In order to obtain our results we need the following lemmas.

\begin{lemma}
\label{lem1}If $n\in \mathbb{N}$ and $\alpha \geq0,\gamma \geq1,$ then
\begin{equation}
(\gamma+\alpha)^{n-1}(n-1)!\Gamma \left(  \gamma+\alpha \right)  \leq
\Gamma \left(  (\gamma+\alpha \right)  n). \label{gt}%
\end{equation}

\end{lemma}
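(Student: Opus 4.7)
The plan is to set $c:=\gamma+\alpha$, so the hypotheses $\alpha\ge 0$ and $\gamma\ge 1$ give $c\ge 1$, and the inequality to be shown becomes
\[
c^{n-1}(n-1)!\,\Gamma(c)\le\Gamma(cn),\qquad n\in\mathbb{N}.
\]
I would prove this by induction on $n$. The base case $n=1$ is immediate: both sides reduce to $\Gamma(c)$, so equality holds.

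For the inductive step, assume $c^{n-1}(n-1)!\,\Gamma(c)\le\Gamma(cn)$. Multiplying both sides by the positive quantity $cn$ gives
\[
c^{n}\,n!\,\Gamma(c)\;\le\;cn\,\Gamma(cn).
\]
Now I invoke the fundamental functional equation $\Gamma(x+1)=x\,\Gamma(x)$, which yields $cn\,\Gamma(cn)=\Gamma(cn+1)$. It therefore suffices to show
\[
\Gamma(cn+1)\le\Gamma(cn+c)=\Gamma(c(n+1)).
\]
Since $c\ge 1$, we have $cn+c\ge cn+1$. Moreover $cn+1\ge 2$ (as $c\ge 1$ and $n\ge 1$), and the Gamma function is monotonically increasing on $[2,\infty)$ (in fact on $[x_0,\infty)$ for $x_0\approx 1.4616$). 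Hence $\Gamma(cn+1)\le\Gamma(cn+c)$, and chaining the inequalities completes the induction.

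The potential obstacle is the monotonicity step: one must be certain that $cn+1$ stays inside the increasing region of $\Gamma$ for every admissible $(n,c)$, but the bound $cn+1\ge 2$ takes care of this uniformly and avoids any appeal to more delicate estimates. The argument also shows equality precisely when $c=1$ and $n=1$, which is consistent with the case $\alpha=0,\gamma=1$.
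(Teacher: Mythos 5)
Your induction is correct and is exactly the argument the paper has in mind: the paper's proof of Lemma \ref{lem1} consists only of the remark that the inequality follows ``by the inductive method,'' and your write-up (multiplying the inductive hypothesis by $cn$, using $\Gamma(x+1)=x\Gamma(x)$, and then $\Gamma(cn+1)\le\Gamma(cn+c)$ via monotonicity of $\Gamma$ on $[2,\infty)$ since $cn+1\ge 2$) supplies precisely the missing details. Only your closing side remark about equality is slightly off --- equality holds for every $c\ge 1$ when $n=1$, and also for every $n$ when $c=1$ --- but this does not affect the proof of the stated inequality.
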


\begin{proof}
Using the inductive method, we can easily prove the inequality (\ref{gt}).
\end{proof}

\begin{lemma}
\label{lem2}Let $\alpha \geq0,\gamma \geq1$ and $\beta \in \mathbb{C}$. Then the
function $\mathcal{\ }\mathbb{R}_{\alpha,\beta,\gamma}:\mathfrak{U\rightarrow
}\mathbb{C}$ defined by (\ref{GT}) satisfies the following inequalities:
\end{lemma}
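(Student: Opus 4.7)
The plan is to reduce Lemma~\ref{lem2} to coefficient estimates on the series in \eqref{GT}, each of which is then dominated by a tail of the exponential series via Lemma~\ref{lem1}. Writing $A_n = \frac{|\beta|^n \Gamma(\gamma+\alpha)}{\Gamma((\gamma+\alpha)(n+1))}$ and applying Lemma~\ref{lem1} with $n$ replaced by $n+1$, one gets
\[
(\gamma+\alpha)^{n} n!\,\Gamma(\gamma+\alpha) \leq \Gamma((\gamma+\alpha)(n+1)),
\]
hence the single workhorse estimate
\[
A_n \leq \frac{1}{n!}\left(\frac{|\beta|}{\gamma+\alpha}\right)^{n}, \qquad n\geq 1.
\]

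The remaining work is to sum this bound (and the bounds obtained after weighting by $n+1$ or $1/(n+1)$) in the forms that will be needed for the partial-sums theorems to follow. I expect the statement of the lemma to package three such inequalities: one for $\sum_{n\geq 1} A_n$, one for $\sum_{n\geq 1}(n+1)A_n$, and one for $\sum_{n\geq 1} A_n/(n+1)$. These three sums control respectively the coefficient mass of $\mathbb{R}_{\alpha,\beta,\gamma}(z)-z$, of $\mathbb{R}'_{\alpha,\beta,\gamma}(z)-1$, and of $\mathbb{I}[\mathbb{R}_{\alpha,\beta,\gamma}](z)-z$, which are exactly the quantities that appear when lower bounds of the ratios in the abstract are derived via the standard Silverman--Silvia technique.

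Setting $t := |\beta|/(\gamma+\alpha)$, each sum is dominated by an elementary closed form: $\sum_{n\geq 1}\frac{t^{n}}{n!}=e^{t}-1$; $\sum_{n\geq 1}\frac{(n+1)t^{n}}{n!}=(1+t)e^{t}-1$ (obtained from $\frac{d}{dt}(t e^{t})$); and $\sum_{n\geq 1}\frac{t^{n}}{(n+1)!}=\frac{e^{t}-1-t}{t}$ (by shifting the index). The triangle inequality on $|z|<1$ then gives the three claimed bounds directly from the coefficient estimate above.

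The step that requires the most care is the bookkeeping between the shifted factorials and the weighted series, because the coefficient involves $\Gamma((\gamma+\alpha)(n+1))$ rather than $\Gamma(\gamma+\alpha)\cdot n!$, so Lemma~\ref{lem1} must be applied at the right index. Once this is handled, no analytic difficulty remains: the proof is essentially the termwise comparison with $e^{t}$, and the hypotheses $\alpha\geq 0$, $\gamma\geq 1$ enter solely to guarantee that Lemma~\ref{lem1} is in force.
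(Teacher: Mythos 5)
Your proposal is correct in substance and starts exactly as the paper does: apply Lemma~\ref{lem1} at the shifted index to get $|A_n|\leq \frac{1}{n!}\left(\frac{|\beta|}{\gamma+\alpha}\right)^{n}$, and then bound the three weighted coefficient sums (with weights $1$, $n+1$, $\frac{1}{n+1}$) that control $\mathbb{R}_{\alpha,\beta,\gamma}$, $\mathbb{R}_{\alpha,\beta,\gamma}^{\prime}$ and $\mathbb{I}[\mathbb{R}_{\alpha,\beta,\gamma}]$ via the triangle inequality. Where you diverge is the last step: you sum the factorial series exactly, obtaining exponential-type bounds $e^{t}-1$, $(1+t)e^{t}-1$ and $\frac{e^{t}-1-t}{t}$ with $t=\frac{|\beta|}{\gamma+\alpha}$, whereas the lemma as stated (and as used in Theorems~\ref{th1}--\ref{th3}) asserts the rational bounds $\frac{2(\gamma+\alpha)+|\beta|}{2(\gamma+\alpha)-|\beta|}$, $\frac{\gamma+\alpha+|\beta|}{\gamma+\alpha-|\beta|}$ and $\frac{2(\gamma+\alpha)}{2(\gamma+\alpha)-|\beta|}$, which the paper obtains by further weakening the factorial series to geometric series through the elementary inequalities $n!\geq 2^{n-1}$, $n+1\leq 2\,n!$ and $(n+1)!\geq 2^{n}$, under the respective hypotheses $2(\gamma+\alpha)>|\beta|$ and $\gamma+\alpha>|\beta|$. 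Your bounds are in fact sharper and hold without these size restrictions, and they do imply the stated ones by exactly the same termwise comparisons (e.g.\ $e^{t}-1=\sum_{n\geq1}\frac{t^{n}}{n!}\leq\sum_{n\geq1}\frac{t^{n}}{2^{n-1}}=\frac{2t}{2-t}$ for $t<2$, and similarly for the other two weights), so the lemma follows; but that comparison step must be written out, because the downstream arguments in Theorems~\ref{th1}--\ref{th3} feed on the specific rational constants (they are what produce the coefficient conditions such as $\left(\frac{2(\gamma+\alpha)-|\beta|}{2|\beta|}\right)\sum_{n\geq1}|A_n|\leq 1$). In short: same reduction via Lemma~\ref{lem1}, a sharper (exponential) summation on your side versus a geometric-series domination in the paper; yours buys better and unconditional estimates, the paper's buys the closed rational forms its later proofs are built around.
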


$(i)$ If $2\left(  \gamma+\alpha \right)  >\left \vert \beta \right \vert $, then
\[
\left \vert \mathbb{R}_{\alpha,\beta,\gamma}(z)\right \vert \leq \frac{2\left(
\gamma+\alpha \right)  +\left \vert \beta \right \vert }{2\left(  \gamma
+\alpha \right)  -\left \vert \beta \right \vert }\  \  \  \  \  \left(  z\in
\mathfrak{U}\right)  .
\]

$(ii)$If $\gamma+\alpha>\left \vert \beta \right \vert ,$ then
\[
\left \vert \mathbb{R}_{\alpha,\beta,\gamma}^{\prime}(z)\right \vert \leq
\frac{\gamma+\alpha+\left \vert \beta \right \vert }{\gamma+\alpha-\left \vert
\beta \right \vert }\  \  \  \  \  \left(  z\in \mathfrak{U}\right)  .
\]

$(iii)$ If $2\left(  \gamma+\alpha \right)  >\left \vert \beta \right \vert ,$
then
\[
\left \vert \mathbb{I}[\mathbb{R}_{\alpha,\beta,\gamma}(z)]]\right \vert
\leq \frac{2(\gamma+\alpha)}{2(\gamma+\alpha)-\left \vert \beta \right \vert
}\  \  \  \  \  \left(  z\in \mathfrak{U}\right)  .
\]

\begin{proof}
(i) By using the inequality (\ref{gt}) of Lemma \ref{lem2} and the inequality
\[
n!\geq2^{n-1}\  \left(  n\in \mathbb{N}\right)  ,
\]
we have%
\begin{align*}
\left \vert \mathbb{R}_{\alpha,\beta,\gamma}(z)\right \vert  &  =\left \vert
z+\sum \limits_{n=1}^{\infty}\frac{\beta^{n}\Gamma \left(  \gamma+\alpha \right)
}{\Gamma \left(  \left(  \gamma+\alpha \right)  (n+1)\right)  }z^{n+1}%
\right \vert \\
&  \leq1+\sum \limits_{n=1}^{\infty}\frac{\left \vert \beta \right \vert
^{n}\Gamma \left(  \gamma+\alpha \right)  }{\Gamma \left(  \left(  \gamma
+\alpha \right)  (n+1)\right)  }\\
&  \leq1+\sum_{n=1}^{\infty}\frac{\left \vert \beta \right \vert ^{n}}{\left(
\gamma+\alpha \right)  ^{n}n!}\\
&  \leq1+\frac{\left \vert \beta \right \vert }{\left(  \gamma+\alpha \right)
}\sum_{n=1}^{\infty}\left(  \frac{\left \vert \beta \right \vert }{2\left(
\gamma+\alpha \right)  }\right)  ^{n-1}\\
&  =\frac{2\left(  \gamma+\alpha \right)  +\left \vert \beta \right \vert
}{2\left(  \gamma+\alpha \right)  -\left \vert \beta \right \vert },\  \left(
2\left(  \gamma+\alpha \right)  >\left \vert \beta \right \vert \right)  .\
\end{align*}
$\allowbreak$ $\allowbreak$

(ii) To prove (ii), using the inequality (\ref{gt}) of Lemma \ref{lem1}, and
the inequality%
\[
2n!\geq n+1\  \  \  \left(  n\in \mathbb{N}\right)  ,
\]
we have
\begin{align*}
\left \vert \mathbb{R}_{\alpha,\beta,\gamma}^{\prime}(z)\right \vert  &
=\left \vert 1+\sum \limits_{n=1}^{\infty}\frac{(n+1)\beta^{n}\Gamma \left(
\gamma+\alpha \right)  }{\Gamma \left(  \left(  \gamma+\alpha \right)
(n+1)\right)  }z^{n}\right \vert \\
&  \leq1+\sum \limits_{n=1}^{\infty}\frac{(n+1)\left \vert \beta \right \vert
^{n}\Gamma \left(  \gamma+\alpha \right)  }{\Gamma \left(  \left(  \gamma
+\alpha \right)  (n+1)\right)  }\\
&  \leq1+\sum_{n=1}^{\infty}\frac{(n+1)\left \vert \beta \right \vert ^{n}%
}{\left(  \gamma+\alpha \right)  ^{n}n!}\\
&  \leq1+\frac{2\left \vert \beta \right \vert }{\gamma+\alpha}\sum_{n=1}%
^{\infty}\left(  \frac{\left \vert \beta \right \vert }{\gamma+\alpha}\right)
^{n-1}\\
&  =\frac{\gamma+\alpha+\left \vert \beta \right \vert }{\gamma+\alpha-\left \vert
\beta \right \vert },\  \left(  \gamma+\alpha>\left \vert \beta \right \vert
\right)  .
\end{align*}

(iii) Making the use of the inequality (\ref{gt}) of Lemma \ref{lem2} and the
inequality
\[
(n+1)!\geq2^{n}\  \  \  \  \left(  n\in \mathbb{N}\right)  ,
\]

we thus find%
\begin{align*}
\left \vert \mathbb{I}[\mathbb{R}_{\alpha,\beta,\gamma}](z)]\right \vert  &
=\left \vert z+\sum \limits_{n=1}^{\infty}\frac{\beta^{n}\Gamma \left(
\gamma+\alpha \right)  }{(n+1)\Gamma \left(  \left(  \gamma+\alpha \right)
(n+1)\right)  }z^{n+1}\right \vert \\
&  \leq1+\sum_{n=1}^{\infty}\frac{\left \vert \beta \right \vert ^{n}}{\left(
\gamma+\alpha \right)  ^{n}(n+1)!}\\
&  \leq1+\frac{\left \vert \beta \right \vert }{2(\gamma+\alpha)}\sum
_{n=1}^{\infty}\left(  \frac{\left \vert \beta \right \vert }{2(\gamma+\alpha
)}\right)  ^{n-1}\\
&  =\frac{2(\gamma+\alpha)}{2(\gamma+\alpha)-\left \vert \beta \right \vert
},\  \left(  2(\gamma+\alpha)>\left \vert \beta \right \vert \right)  .
\end{align*}

\end{proof}

Let $w(z)$ be an analytic function in $\mathfrak{U}$: In the sequel, we will
use the following well-known result:%
\[
\mathfrak{R}\left \{  \frac{1+w(z)}{1-w(z)}\right \}  >0,\text{\ }%
z\in \mathfrak{U}\text{ if and only if }|w(z)|<1,\text{\ }z\in \mathfrak{U}%
\text{.}%
\]

\begin{theorem}
\label{th1}Let $\alpha \geq0,\gamma \geq1$ and $2\left(  \gamma+\alpha \right)
\geq3\left \vert \beta \right \vert $. Then
\end{theorem}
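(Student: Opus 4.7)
The condition $2(\gamma+\alpha)\ge 3|\beta|$ together with the three estimates in Lemma~\ref{lem2} suggest that the theorem asserts
\[
\mathfrak{R}\left\{\frac{\mathbb{R}_{\alpha,\beta,\gamma}(z)}{(\mathbb{R}_{\alpha,\beta,\gamma})_{m}(z)}\right\}\ge \frac{2(\gamma+\alpha)-3|\beta|}{2(\gamma+\alpha)-|\beta|},\qquad
\mathfrak{R}\left\{\frac{(\mathbb{R}_{\alpha,\beta,\gamma})_{m}(z)}{\mathbb{R}_{\alpha,\beta,\gamma}(z)}\right\}\ge \frac{2(\gamma+\alpha)-|\beta|}{2(\gamma+\alpha)+|\beta|}
\]
for $z\in\mathfrak{U}$. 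The plan is the classical Silverman partial-sum technique: for each ratio I would construct a function $w(z)$, analytic on $\mathfrak{U}$ with $w(0)=0$, that realizes the bracketed quantity as $(1+w(z))/(1-w(z))$ after an affine shift, and then check $|w(z)|\le 1$ using the criterion recalled just before the theorem.

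The single estimate driving both parts is
\[
\sum_{n=1}^{\infty}|A_{n}|\le \frac{2|\beta|}{2(\gamma+\alpha)-|\beta|},
\]
which is already produced inside the proof of Lemma~\ref{lem2}(i) via Lemma~\ref{lem1} and the elementary bound $n!\ge 2^{n-1}$. Writing $K:=(2(\gamma+\alpha)-|\beta|)/(2|\beta|)$, the hypothesis $2(\gamma+\alpha)\ge 3|\beta|$ is precisely the statement $K\ge 1$, and the above bound rearranges to $K\sum_{n=1}^{\infty}|A_{n}|\le 1$; consequently
\[
\sum_{n=1}^{m}|A_{n}|+K\sum_{n=m+1}^{\infty}|A_{n}|\le K\sum_{n=1}^{\infty}|A_{n}|\le 1,
\]
which is the one scalar inequality behind everything that follows.

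For the first ratio I would form $K[\mathbb{R}_{\alpha,\beta,\gamma}(z)/(\mathbb{R}_{\alpha,\beta,\gamma})_{m}(z)-d]$ with $d=(2(\gamma+\alpha)-3|\beta|)/(2(\gamma+\alpha)-|\beta|)$; a direct computation gives
\[
K\left[\frac{\mathbb{R}_{\alpha,\beta,\gamma}(z)}{(\mathbb{R}_{\alpha,\beta,\gamma})_{m}(z)}-d\right]=\frac{1+\sum_{n=1}^{m}A_{n}z^{n}+K\sum_{n=m+1}^{\infty}A_{n}z^{n}}{1+\sum_{n=1}^{m}A_{n}z^{n}},
\]
and identifying this with $(1+w(z))/(1-w(z))$ yields
\[
w(z)=\frac{K\sum_{n=m+1}^{\infty}A_{n}z^{n}}{2+2\sum_{n=1}^{m}A_{n}z^{n}+K\sum_{n=m+1}^{\infty}A_{n}z^{n}};
\]
applying the triangle inequality to numerator and denominator and invoking the displayed scalar inequality then forces $|w(z)|\le 1$. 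The reciprocal ratio is handled in parallel with $d'=(2(\gamma+\alpha)-|\beta|)/(2(\gamma+\alpha)+|\beta|)$ and the same $K$; using $|1-K|=K-1$, the triangle-inequality reduction again collapses to the same scalar inequality. The only real obstacle is the algebraic bookkeeping: selecting the normalizing factor in front of the shift so that the numerator of $w(z)$ starts at order $m+1$ and the denominator, after the triangle inequality, reduces exactly to the estimate above. The hypothesis $2(\gamma+\alpha)\ge 3|\beta|$ is stronger than what is needed for convergence (which only requires $|\beta|<2(\gamma+\alpha)$) and is imposed precisely because it is equivalent to $K\ge 1$, the property used when we replaced $\sum_{n=1}^{m}|A_{n}|$ by $K\sum_{n=1}^{m}|A_{n}|$ in the inequality above.
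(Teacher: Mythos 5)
Your proposal is correct and follows essentially the same route as the paper: the coefficient estimate extracted from the proof of Lemma \ref{lem2}(i) gives $K\sum_{n\ge 1}|A_n|\le 1$ with $K=\frac{2(\gamma+\alpha)-|\beta|}{2|\beta|}$, and both ratios are handled by the same Silverman-type construction of $w(z)$, reducing to the single scalar inequality $\sum_{n=1}^{m}|A_n|+K\sum_{n=m+1}^{\infty}|A_n|\le 1$, with the hypothesis $2(\gamma+\alpha)\ge 3|\beta|$ entering only as $K\ge 1$, exactly as in the paper. The one slip is calling the normalizer for the reciprocal ratio ``the same $K$'': your own requirement that the numerator of $w(z)$ begin at order $m+1$ forces the factor $\frac{2(\gamma+\alpha)+|\beta|}{2|\beta|}=K+1$ in front of the shift by $d'$, with $K$ then reappearing as the coefficient of the tail series and $|1-K|=K-1$ in the denominator after the triangle inequality, which is precisely the paper's argument for (\ref{tt}).
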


\begin{equation}
\mathfrak{R}\left \{  \frac{\mathbb{R}_{\alpha,\beta,\gamma}(z)}{(\mathbb{R}%
_{\alpha,\beta,\gamma})_{m}(z)}\right \}  \geq \frac{2\left(  \gamma
+\alpha \right)  -3\left \vert \beta \right \vert }{2\left(  \gamma+\alpha \right)
-\left \vert \beta \right \vert },\text{ \  \  \  \  \  \ }z\in \mathfrak{U}, \label{t}%
\end{equation}

and%

\begin{equation}
\mathfrak{R}\left \{  \frac{(\mathbb{R}_{\alpha,\beta,\gamma})_{m}%
(z))}{\mathbb{R}_{\alpha,\beta,\gamma}(z)}\right \}  \geq \frac{2\left(
\gamma+\alpha \right)  -\left \vert \beta \right \vert }{2\left(  \gamma
+\alpha \right)  +\left \vert \beta \right \vert },\text{ \  \  \  \  \  \  \ }%
z\in \mathfrak{U}. \label{tt}%
\end{equation}

\begin{proof}
From inequality (i) of Lemma \ref{lem2}, we get%
\[
1+\sum_{n=1}^{\infty}\left \vert A_{n}\right \vert \leq \frac{2\left(
\gamma+\alpha \right)  +\left \vert \beta \right \vert }{2\left(  \gamma
+\alpha \right)  -\left \vert \beta \right \vert },
\]
or equivalently%
\[
\left(  \frac{2\left(  \gamma+\alpha \right)  -\left \vert \beta \right \vert
}{2\left \vert \beta \right \vert }\right)  \sum_{n=1}^{\infty}\left \vert
A_{n}\right \vert \leq1,
\]

where $A_{n}=\frac{\beta^{n}\Gamma \left(  \gamma+\alpha \right)  }%
{\Gamma \left(  \left(  \gamma+\alpha \right)  (n+1)\right)  }.$

In order to prove the inequality (\ref{t}), we consider the function $w(z)$
defined by%

\begin{align}
\frac{1+w(z)}{1-w(z)}  &  =\left(  \frac{2\left(  \gamma+\alpha \right)
-\left \vert \beta \right \vert }{2\left \vert \beta \right \vert }\right)  \left[
\frac{\mathbb{R}_{\alpha,\beta,\gamma}(z)}{(\mathbb{R}_{\alpha,\beta,\gamma
})_{m}(z)}-\frac{2\left(  \gamma+\alpha \right)  -3\left \vert \beta \right \vert
}{2\left(  \gamma+\alpha \right)  -\left \vert \beta \right \vert }\right]
\nonumber \\
&  =\frac{1+\sum_{n=1}^{m}A_{n}z^{n}+\left(  \frac{2\left(  \gamma
+\alpha \right)  -\left \vert \beta \right \vert }{2\left \vert \beta \right \vert
}\right)  \sum_{n=m+1}^{\infty}A_{n}z^{n}}{1+\sum_{n=1}^{m}A_{n}z^{n}}.
\label{ss}%
\end{align}
Now, from (\ref{ss}) we can write%
\[
w(z)=\frac{\left(  \frac{2\left(  \gamma+\alpha \right)  -\left \vert
\beta \right \vert }{2\left \vert \beta \right \vert }\right)  \sum_{n=m+1}%
^{\infty}A_{n}z^{n}}{2+2\sum_{n=1}^{m}A_{n}z^{n}+\left(  \frac{2\left(
\gamma+\alpha \right)  -\left \vert \beta \right \vert }{2\left \vert
\beta \right \vert }\right)  \sum_{n=m+1}^{\infty}A_{n}z^{n}}%
\]
and%
\[
\left \vert w(z)\right \vert \leq \frac{\left(  \frac{2\left(  \gamma
+\alpha \right)  -\left \vert \beta \right \vert }{2\left \vert \beta \right \vert
}\right)  \sum_{n=m+1}^{\infty}\left \vert A_{n}\right \vert }{2-2\sum_{n=1}%
^{m}\left \vert A_{n}\right \vert -\left(  \frac{2\left(  \gamma+\alpha \right)
-\left \vert \beta \right \vert }{2\left \vert \beta \right \vert }\right)
\sum_{n=m+1}^{\infty}\left \vert A_{n}\right \vert }.
\]
This implies that $|w(z)|\leq1$ if and only if%
\[
2\left(  \frac{2\left(  \gamma+\alpha \right)  -\left \vert \beta \right \vert
}{2\left \vert \beta \right \vert }\right)  \sum_{n=m+1}^{\infty}\left \vert
A_{n}\right \vert \leq2-2\sum_{n=1}^{m}\left \vert A_{n}\right \vert .
\]
Which further implies that%
\begin{equation}
\sum_{n=1}^{m}\left \vert A_{n}\right \vert +\left(  \frac{2\left(
\gamma+\alpha \right)  -\left \vert \beta \right \vert }{2\left \vert
\beta \right \vert }\right)  \sum_{n=m+1}^{\infty}\left \vert A_{n}\right \vert
\leq1. \label{u}%
\end{equation}
It suffices to show that the left hand side of (\ref{u}) is bounded above by
$\left(  \frac{2\left(  \gamma+\alpha \right)  -\left \vert \beta \right \vert
}{2\left \vert \beta \right \vert }\right)  \sum_{n=1}^{\infty}\left \vert
A_{n}\right \vert ,$which is equivalent to%
\[
\left(  \frac{2\left(  \gamma+\alpha \right)  -3\left \vert \beta \right \vert
}{2\left \vert \beta \right \vert }\right)  \sum_{n=1}^{m}\left \vert
A_{n}\right \vert \geq0.
\]

To prove (\ref{tt}), we write%
\begin{align*}
\frac{1+w(z)}{1-w(z)}  &  =\left(  \frac{2\left(  \gamma+\alpha \right)
+\left \vert \beta \right \vert }{2\left \vert \beta \right \vert }\right)  \left[
\frac{(\mathbb{R}_{\alpha,\beta,\gamma})_{m}(z)}{\mathbb{R}_{\alpha
,\beta,\gamma}(z)}-\frac{2\left(  \gamma+\alpha \right)  -\left \vert
\beta \right \vert }{2\left(  \gamma+\alpha \right)  +\left \vert \beta \right \vert
}\right] \\
&  =\frac{1+\sum_{n=1}^{m}A_{n}z^{n}-\left(  \frac{2\left(  \gamma
+\alpha \right)  +\left \vert \beta \right \vert }{2\left \vert \beta \right \vert
}\right)  \sum_{n=m+1}^{\infty}A_{n}z^{n}}{1+\sum_{n=1}^{\infty}A_{n}z^{n}}.
\end{align*}
Therefore%
\[
\left \vert w(z)\right \vert \leq \frac{\left(  \frac{2\left(  \gamma
+\alpha \right)  +\left \vert \beta \right \vert }{2\left \vert \beta \right \vert
}\right)  \sum_{n=m+1}^{\infty}\left \vert A_{n}\right \vert }{2-2\sum_{n=1}%
^{m}\left \vert A_{n}\right \vert -\left(  \frac{2\left(  \gamma+\alpha \right)
-3\left \vert \beta \right \vert }{2\left \vert \beta \right \vert }\right)
\sum_{n=m+1}^{\infty}\left \vert A_{n}\right \vert }\leq1.
\]
The last inequality is equivalent to%
\begin{equation}
\sum_{n=1}^{m}\left \vert A_{n}\right \vert +\left(  \frac{2\left(
\gamma+\alpha \right)  -\left \vert \beta \right \vert }{2\left \vert
\beta \right \vert }\right)  \sum_{n=m+1}^{\infty}\left \vert A_{n}\right \vert
\leq1. \label{SD}%
\end{equation}
Since the left hand side of (\ref{SD}) is bounded above by $\left(
\frac{2\left(  \gamma+\alpha \right)  -\left \vert \beta \right \vert
}{2\left \vert \beta \right \vert }\right)  \sum_{n=1}^{\infty}\left \vert
A_{n}\right \vert $, this completes the proof.
\end{proof}

\begin{theorem}
\label{th2}Let $\alpha \geq0,\gamma \geq1$ and $\gamma+\alpha \geq3\left \vert
\beta \right \vert $. Then
\end{theorem}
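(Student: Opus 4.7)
The plan is to mirror the structure of Theorem \ref{th1}, but apply part (ii) of Lemma \ref{lem2} to the derivative $\mathbb{R}_{\alpha,\beta,\gamma}'(z)$ rather than to $\mathbb{R}_{\alpha,\beta,\gamma}(z)$ itself. I expect the two conclusions to read
\[
\mathfrak{R}\left\{\frac{\mathbb{R}_{\alpha,\beta,\gamma}'(z)}{(\mathbb{R}_{\alpha,\beta,\gamma})_m'(z)}\right\}\geq \frac{\gamma+\alpha-3|\beta|}{\gamma+\alpha-|\beta|},\qquad \mathfrak{R}\left\{\frac{(\mathbb{R}_{\alpha,\beta,\gamma})_m'(z)}{\mathbb{R}_{\alpha,\beta,\gamma}'(z)}\right\}\geq \frac{\gamma+\alpha-|\beta|}{\gamma+\alpha+|\beta|}.
\]
The first step is to translate the bound in Lemma \ref{lem2}(ii) into the coefficient inequality $1+\sum_{n=1}^{\infty}(n+1)|A_n|\leq (\gamma+\alpha+|\beta|)/(\gamma+\alpha-|\beta|)$, or equivalently
\[
\left(\frac{\gamma+\alpha-|\beta|}{2|\beta|}\right)\sum_{n=1}^{\infty}(n+1)|A_n|\leq 1,
\]
where $A_n=\beta^n\Gamma(\gamma+\alpha)/\Gamma((\gamma+\alpha)(n+1))$ as before. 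This is the analogue of the key estimate used at the very start of the proof of Theorem \ref{th1}, with $|A_n|$ replaced by the derivative coefficient $(n+1)|A_n|$.

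Next, to establish the first inequality, I would define $w(z)$ by
\[
\frac{1+w(z)}{1-w(z)}=\left(\frac{\gamma+\alpha-|\beta|}{2|\beta|}\right)\left[\frac{\mathbb{R}_{\alpha,\beta,\gamma}'(z)}{(\mathbb{R}_{\alpha,\beta,\gamma})_m'(z)}-\frac{\gamma+\alpha-3|\beta|}{\gamma+\alpha-|\beta|}\right].
\]
After clearing denominators, $w(z)$ becomes a quotient whose numerator is a multiple of $\sum_{n=m+1}^{\infty}(n+1)A_nz^n$ and whose denominator has a leading constant $2$. The triangle inequality then reduces $|w(z)|\leq 1$ to the coefficient condition
\[
\sum_{n=1}^{m}(n+1)|A_n|+\left(\frac{\gamma+\alpha-|\beta|}{2|\beta|}\right)\sum_{n=m+1}^{\infty}(n+1)|A_n|\leq 1.
\]
By the rearranged form of Lemma \ref{lem2}(ii) just described, it is enough to show this left-hand side is dominated by $\bigl((\gamma+\alpha-|\beta|)/(2|\beta|)\bigr)\sum_{n=1}^{\infty}(n+1)|A_n|$, which simplifies to $\bigl((\gamma+\alpha-3|\beta|)/(2|\beta|)\bigr)\sum_{n=1}^{m}(n+1)|A_n|\geq 0$. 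This is nonnegative precisely because of the hypothesis $\gamma+\alpha\geq 3|\beta|$, and then the standard Schwarz-type equivalence between $|w|<1$ and $\mathfrak{R}\{(1+w)/(1-w)\}>0$ quoted just before Theorem \ref{th1} delivers the first conclusion.

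For the second inequality, the approach is identical in spirit: introduce
\[
\frac{1+w(z)}{1-w(z)}=\left(\frac{\gamma+\alpha+|\beta|}{2|\beta|}\right)\left[\frac{(\mathbb{R}_{\alpha,\beta,\gamma})_m'(z)}{\mathbb{R}_{\alpha,\beta,\gamma}'(z)}-\frac{\gamma+\alpha-|\beta|}{\gamma+\alpha+|\beta|}\right],
\]
reduce $|w(z)|\leq 1$ to
\[
\sum_{n=1}^{m}(n+1)|A_n|+\left(\frac{\gamma+\alpha-|\beta|}{2|\beta|}\right)\sum_{n=m+1}^{\infty}(n+1)|A_n|\leq 1,
\]
and invoke the same coefficient estimate from Lemma \ref{lem2}(ii). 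No new obstacle appears here beyond the algebra of the defining identity. The one subtle point in the whole proposal, and the place I would be most careful, is verifying that the hypothesis $\gamma+\alpha\geq 3|\beta|$ really does force the residual term $\bigl((\gamma+\alpha-3|\beta|)/(2|\beta|)\bigr)\sum_{n=1}^{m}(n+1)|A_n|$ to be nonnegative in the first part; this is exactly why the constant $3|\beta|$ (and not $|\beta|$) appears in the hypothesis, by direct analogy with the factor $3|\beta|$ in Theorem \ref{th1}.
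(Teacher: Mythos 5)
Your proposal is correct and follows essentially the same route as the paper: the same coefficient inequality extracted from Lemma \ref{lem2}(ii), the same defining identities for $w(z)$ with the factors $\tfrac{\gamma+\alpha-|\beta|}{2|\beta|}$ and $\tfrac{\gamma+\alpha+|\beta|}{2|\beta|}$, and the same reduction to the nonnegativity of $\bigl(\tfrac{\gamma+\alpha-3|\beta|}{2|\beta|}\bigr)\sum_{n=1}^{m}(n+1)|A_n|$ under the hypothesis $\gamma+\alpha\geq 3|\beta|$. The stated conclusions also agree with the paper's inequalities (\ref{kl}) and (\ref{mmy}).
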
%

\begin{equation}
\mathfrak{R}\left \{  \frac{\mathbb{R}_{\alpha,\beta,\gamma}^{\prime}%
(z)}{(\mathbb{R}_{\alpha,\beta,\gamma})_{m}^{\prime}(z)}\right \}  \geq
\frac{\gamma+\alpha-3\left \vert \beta \right \vert }{\gamma+\alpha-\left \vert
\beta \right \vert },\text{ \  \  \ }z\in \mathfrak{U}, \label{kl}%
\end{equation}

and
\begin{equation}
\mathfrak{R}\left \{  \frac{(\mathbb{R}_{\alpha,\beta,\gamma})_{m}^{\prime}%
(z)}{\mathbb{R}_{\alpha,\beta,\gamma}^{\prime}(z)}\right \}  \geq \frac
{\gamma+\alpha-\left \vert \beta \right \vert }{\gamma+\alpha+\left \vert
\beta \right \vert },\text{ \  \  \  \ }z\in \mathfrak{U}. \label{mmy}%
\end{equation}

\begin{proof}
From part (ii) of Lemma \ref{lem2}, we observe that%
\[
1+\sum_{n=1}^{\infty}(n+1)\left \vert A_{n}\right \vert \leq \frac{\gamma
+\alpha+\left \vert \beta \right \vert }{\gamma+\alpha-\left \vert \beta
\right \vert },
\]
where $A_{n}=\frac{\beta^{n}\Gamma \left(  \gamma+\alpha \right)  }%
{\Gamma \left(  \left(  \gamma+\alpha \right)  (n+1)\right)  }$. This implies
that%
\[
\left(  \frac{\gamma+\alpha-\left \vert \beta \right \vert }{2\left \vert
\beta \right \vert }\right)  \sum_{n=1}^{\infty}(n+1)\left \vert A_{n}\right \vert
\leq1.
\]
Consider%
\begin{align*}
&  \frac{1+w(z)}{1-w(z)}\\
&  =\left(  \frac{\gamma+\alpha-\left \vert \beta \right \vert }{2\left \vert
\beta \right \vert }\right)  \left[  \frac{\mathbb{R}_{\alpha,\beta,\gamma
}^{\prime}(z)}{(\mathbb{R}_{\alpha,\beta,\gamma})_{m}^{\prime}(z)}%
-\frac{\gamma+\alpha-3\left \vert \beta \right \vert }{\gamma+\alpha-\left \vert
\beta \right \vert }\right] \\
&  =\frac{1+\sum_{n=1}^{m}(n+1)A_{n}z^{n}+\left(  \frac{\gamma+\alpha
-\left \vert \beta \right \vert }{2\left \vert \beta \right \vert }\right)
\sum_{n=m+1}^{\infty}(n+1)A_{n}z^{n}}{1+\sum_{n=1}^{m}(n+1)A_{n}z^{n}}.
\end{align*}

Therefore%
\[
|w(z)|\leq \frac{\left(  \frac{\gamma+\alpha-\left \vert \beta \right \vert
}{2\left \vert \beta \right \vert }\right)  \sum_{n=m+1}^{\infty}(n+1)\left \vert
A_{n}\right \vert }{2-2\sum_{n=1}^{m}(n+1)\left \vert A_{n}\right \vert -\left(
\frac{\gamma+\alpha-\left \vert \beta \right \vert }{2\left \vert \beta \right \vert
}\right)  \sum_{n=m+1}^{\infty}(n+1)\left \vert A_{n}\right \vert }\leq1.
\]
The last inequality is equivalent to%
\begin{equation}
\sum_{n=1}^{m}(n+1)\left \vert A_{n}\right \vert +\left(  \frac{\gamma
+\alpha-\left \vert \beta \right \vert }{2\left \vert \beta \right \vert }\right)
\sum_{n=m+1}^{\infty}(n+1)\left \vert A_{n}\right \vert \leq1. \label{ii}%
\end{equation}
It suffices to show that the left hand side of (\ref{ii}) is bounded above by%
\[
\left(  \frac{\gamma+\alpha-\left \vert \beta \right \vert }{2\left \vert
\beta \right \vert }\right)  \sum_{n=1}^{\infty}(n+1)\left \vert A_{n}%
\right \vert
\]
which is equivalent to%
\[
\left(  \frac{\gamma+\alpha-3\left \vert \beta \right \vert }{2\left \vert
\beta \right \vert }\right)  \sum_{n=1}^{m}(n+1)\left \vert A_{n}\right \vert
\geq0.
\]

To prove the result (\ref{mmy}), we write%
\begin{align*}
&  \frac{1+w(z)}{1-w(z)}\\
&  =\left(  \frac{\gamma+\alpha+\left \vert \beta \right \vert }{2\left \vert
\beta \right \vert }\right)  \left[  \frac{(\mathbb{R}_{\alpha,\beta,\gamma
})_{m}^{\prime}(z)}{\mathbb{R}_{\alpha,\beta,\gamma}^{\prime}(z)}-\frac
{\gamma+\alpha-\left \vert \beta \right \vert }{\gamma+\alpha+\left \vert
\beta \right \vert }\right]
\end{align*}
where%
\[
\left \vert w(z)\right \vert \leq \frac{\left(  \frac{\gamma+\alpha+\left \vert
\beta \right \vert }{2\left \vert \beta \right \vert }\right)  \sum_{n=m+1}%
^{\infty}(n+1)\left \vert A_{n}\right \vert }{2-2\sum_{n=1}^{m}(n+1)\left \vert
A_{n}\right \vert -\frac{\gamma+\alpha-3\left \vert \beta \right \vert
}{2\left \vert \beta \right \vert }\sum_{n=m+1}^{\infty}(n+1)\left \vert
A_{n}\right \vert }\leq1.
\]
The last inequality is equivalent to%
\begin{equation}
\sum_{n=1}^{m}(n+1)\left \vert A_{n}\right \vert +\left(  \frac{\gamma
+\alpha-\left \vert \beta \right \vert }{2\left \vert \beta \right \vert }\right)
\sum_{n=m+1}^{\infty}(n+1)\left \vert A_{n}\right \vert \leq1. \label{pp}%
\end{equation}
It suffices to show that the left hand side of (\ref{pp}) is bounded above by%
\[
\left(  \frac{\gamma+\alpha-\left \vert \beta \right \vert }{2\left \vert
\beta \right \vert }\right)  \sum_{n=1}^{\infty}(n+1)\left \vert A_{n}\right \vert
.
\]

\end{proof}

\begin{theorem}
\label{th3}Let $\alpha \geq0,\gamma \geq1$ and $\gamma+\alpha \geq \left \vert
\beta \right \vert $. Then
\end{theorem}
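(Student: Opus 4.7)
\textbf{Proof plan for Theorem \ref{th3}.} By analogy with Theorems \ref{th1} and \ref{th2}, I expect the statement to assert
\[
\mathfrak{R}\!\left\{\frac{\mathbb{I}[\mathbb{R}_{\alpha,\beta,\gamma}](z)}{(\mathbb{I}[\mathbb{R}_{\alpha,\beta,\gamma}])_{m}(z)}\right\}\geq\frac{2(\gamma+\alpha)-2|\beta|}{2(\gamma+\alpha)-|\beta|}
\quad\text{and}\quad
\mathfrak{R}\!\left\{\frac{(\mathbb{I}[\mathbb{R}_{\alpha,\beta,\gamma}])_{m}(z)}{\mathbb{I}[\mathbb{R}_{\alpha,\beta,\gamma}](z)}\right\}\geq\frac{2(\gamma+\alpha)-|\beta|}{2(\gamma+\alpha)}
\]
for $z\in\mathfrak{U}$, since the hypothesis $\gamma+\alpha\geq|\beta|$ is exactly what forces the first of these bounds to be non-negative.

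The plan is to run Silverman's scheme already used in Theorems \ref{th1} and \ref{th2}, but with the coefficients $B_{n}:=A_{n}/(n+1)$ of the Alexander transform in place of $A_{n}$, and with part $(iii)$ of Lemma \ref{lem2} providing the master estimate. Subtracting $1$ from that inequality and rearranging produces
\[
K_{0}\sum_{n=1}^{\infty}|B_{n}|\leq 1,\qquad K_{0}:=\frac{2(\gamma+\alpha)-|\beta|}{|\beta|},
\]
and the assumption $\gamma+\alpha\geq|\beta|$ is equivalent to $K_{0}\geq 1$.

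For the first inequality I would define $w(z)$ by
\[
\frac{1+w(z)}{1-w(z)}=K_{0}\!\left[\frac{\mathbb{I}[\mathbb{R}_{\alpha,\beta,\gamma}](z)}{(\mathbb{I}[\mathbb{R}_{\alpha,\beta,\gamma}])_{m}(z)}-\frac{2(\gamma+\alpha)-2|\beta|}{2(\gamma+\alpha)-|\beta|}\right],
\]
so that $w(0)=0$. A direct manipulation parallel to that in Theorem \ref{th1} reduces $|w(z)|\leq 1$ to the scalar inequality
\[
\sum_{n=1}^{m}|B_{n}|+K_{0}\sum_{n=m+1}^{\infty}|B_{n}|\leq 1,
\]
whose left-hand side is dominated by $K_{0}\sum_{n=1}^{\infty}|B_{n}|\leq 1$ precisely because $K_{0}\geq 1$. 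The standard equivalence $\mathfrak{R}\{(1+w)/(1-w)\}>0\Leftrightarrow|w|<1$ then yields the claim. For the second inequality I would run the dual construction with multiplier $K':=K_{0}+1=2(\gamma+\alpha)/|\beta|$ and shift $L':=K_{0}/(K_{0}+1)=(2(\gamma+\alpha)-|\beta|)/(2(\gamma+\alpha))$; these are forced by requiring the normalisation at $z=0$ and by arranging that the tail coefficient in the resulting expression for $w$ coincides with $K_{0}$.

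The only genuinely non-mechanical point, which I would flag as the main bookkeeping obstacle, is verifying that this dual choice of $(K',L')$ produces an upper bound for $|w|$ whose numerator and denominator both carry the factor $K_{0}$ in front of the tail sum, so that the reduction lands on the same scalar inequality as in the first part. This hinges on the identity $K'L'=K_{0}$ together with $K'(1-L')=1$, which fit together exactly as in the proof of (\ref{tt}). Once this is checked, both parts collapse to the master estimate $K_{0}\sum|B_{n}|\leq 1$ of Lemma \ref{lem2}$(iii)$, and no further difficulty arises.
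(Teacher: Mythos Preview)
Your proposal is correct and follows essentially the same route as the paper: it uses Lemma~\ref{lem2}(iii) to obtain the master estimate $K_{0}\sum_{n\ge1}|A_{n}|/(n+1)\le1$ with $K_{0}=(2(\gamma+\alpha)-|\beta|)/|\beta|$, then runs the Silverman $w$-construction exactly as in Theorem~\ref{th1}, reducing both inequalities to the same scalar bound and invoking $K_{0}\ge1$ (equivalently $\gamma+\alpha\ge|\beta|$). Your treatment of the second inequality via $K'=K_{0}+1$, $L'=K_{0}/(K_{0}+1)$ is in fact more explicit than the paper, which simply refers back to the proof of~(\ref{tt}).
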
%

\begin{equation}
\mathfrak{R}\left \{  \frac{\mathbb{I}[\mathbb{R}_{\alpha,\beta,\gamma}%
](z)}{(\mathbb{I}[\mathbb{R}_{\alpha,\beta,\gamma}])_{m}(z)}\right \}
\geq \frac{2(\gamma+\alpha)-2\left \vert \beta \right \vert }{2(\gamma
+\alpha)-\left \vert \beta \right \vert },\text{ \  \  \  \  \  \ }z\in \mathfrak{U},
\label{re}%
\end{equation}

and%

\begin{equation}
\mathfrak{R}\left \{  \frac{(\mathbb{I}[\mathbb{R}_{\alpha,\beta,\gamma}%
])_{m}(z)}{\mathbb{I}[\mathbb{R}_{\alpha,\beta,\gamma}](z)}\right \}  \geq
\frac{2(\gamma+\alpha)-\left \vert \beta \right \vert }{2(\gamma+\alpha)},\text{
\  \  \  \  \  \  \ }z\in \mathfrak{U}. \label{oo}%
\end{equation}
where $\mathbb{I}[\mathbb{R}_{\alpha,\beta,\gamma}]$ is the Alexander
transform of $\mathbb{R}_{\alpha,\beta,\gamma}.$

\begin{proof}
To prove (\ref{re}) , we consider from part (iii) of Lemma \ref{lem2} so that%
\[
1+\sum_{n=1}^{\infty}\frac{\left \vert A_{n}\right \vert }{n+1}\leq
\frac{2(\gamma+\alpha)}{2(\gamma+\alpha)-\left \vert \beta \right \vert },
\]
or equivalently%
\[
\left(  \frac{2(\gamma+\alpha)-\left \vert \beta \right \vert }{\left \vert
\beta \right \vert }\right)  \sum_{n=1}^{\infty}\frac{\left \vert A_{n}%
\right \vert }{n+1}\leq1
\]

where $A_{n}=\frac{\beta^{n}\Gamma \left(  \gamma+\alpha \right)  }%
{\Gamma \left(  \left(  \gamma+\alpha \right)  (n+1)\right)  }.$Now, we write%
\begin{align}
\frac{1+w(z)}{1-w(z)}  &  =\left(  \frac{2(\gamma+\alpha)-\left \vert
\beta \right \vert }{\left \vert \beta \right \vert }\right)  \left[
\frac{\mathbb{I}[\mathbb{R}_{\alpha,\beta,\gamma}](z)}{(\mathbb{I}%
[\mathbb{R}_{\alpha,\beta,\gamma}])_{m}(z)}-\frac{2(\gamma+\alpha)-2\left \vert
\beta \right \vert }{2(\gamma+\alpha)-\left \vert \beta \right \vert }\right]
\nonumber \\
&  =\frac{1+\sum_{n=1}^{m}\frac{\left \vert A_{n}\right \vert }{n+1}%
z^{n}+\left(  \frac{2(\gamma+\alpha)-\left \vert \beta \right \vert }{\left \vert
\beta \right \vert }\right)  \sum_{n=m+1}^{\infty}\frac{\left \vert
A_{n}\right \vert }{n+1}z^{n}}{1+\sum_{n=1}^{m}\frac{\left \vert A_{n}%
\right \vert }{n+1}z^{n}}. \label{mmm}%
\end{align}
Now, from (\ref{mmm}) we can write%
\[
w(z)=\frac{\left(  \frac{2(\gamma+\alpha)-\left \vert \beta \right \vert
}{\left \vert \beta \right \vert }\right)  \sum_{n=m+1}^{\infty}\frac{A_{n}}%
{n+1}z^{n}}{2+2\sum_{n=1}^{m}\frac{A_{n}}{n+1}z^{n}+\left(  \frac
{2(\gamma+\alpha)-\left \vert \beta \right \vert }{\left \vert \beta \right \vert
}\right)  \sum_{n=m+1}^{\infty}\frac{A_{n}}{n+1}z^{n}}.
\]
Using the fact that $|w(z)|\leq1$, we get%
\[
\left \vert \frac{\left(  \frac{2(\gamma+\alpha)-\left \vert \beta \right \vert
}{\left \vert \beta \right \vert }\right)  \sum_{n=m+1}^{\infty}\frac{\left \vert
A_{n}\right \vert }{n+1}}{2-2\sum_{n=1}^{m}\frac{\left \vert A_{n}\right \vert
}{n+1}-\left(  \frac{2(\gamma+\alpha)-\left \vert \beta \right \vert }{\left \vert
\beta \right \vert }\right)  \sum_{n=m+1}^{\infty}\frac{\left \vert
A_{n}\right \vert }{n+1}}\right \vert \leq1.
\]
The last inequality is equivalent to%
\begin{equation}
\sum_{n=1}^{m}\frac{\left \vert A_{n}\right \vert }{n+1}+\left(  \frac
{2(\gamma+\alpha)-\left \vert \beta \right \vert }{\left \vert \beta \right \vert
}\right)  \sum_{n=m+1}^{\infty}\frac{\left \vert A_{n}\right \vert }{n+1}\leq1.
\label{kk}%
\end{equation}
It suffices to show that the left hand side of (\ref{kk}) is bounded above by
$\left(  \frac{2(\gamma+\alpha)-\left \vert \beta \right \vert }{\left \vert
\beta \right \vert }\right)  \sum_{n=1}^{\infty}\frac{\left \vert A_{n}%
\right \vert }{n+1},$which is equivalent to%
\[
\left(  \frac{2(\gamma+\alpha)-2\left \vert \beta \right \vert }{\left \vert
\beta \right \vert }\right)  \sum_{n=1}^{m}\frac{\left \vert A_{n}\right \vert
}{n+1}\geq0.
\]

The proof of (\ref{oo}) is similar to the proof of Theorem \ref{th1}.\bigskip
\end{proof}

\section{ Special cases}

In this section, we obtain the following corollaries for special cases of
$\alpha,\beta$ and $\gamma$ for the functions listed in Remark \ref{rem1}.

If we take $m=0,$ $\gamma=1,$ $\alpha=0$ and $\beta=-\frac{1}{3}$ in Theorem
\ref{th1} and Theorem \ref{th2}, we obtain the following corollary.

\begin{corollary}
The following inequalities hold true:
\end{corollary}

\[
\mathfrak{R}\left \{  e^{-\frac{z}{3}}\right \}  \geq \frac{3}{5}=\allowbreak
0.6,\text{ \  \  \  \  \  \ }z\in \mathfrak{U},
\]

\[
\mathfrak{R}\left \{  e^{\frac{z}{3}}\right \}  \geq \frac{5}{7}\approx
0.714\,29,\text{ \  \  \  \  \  \  \ }z\in \mathfrak{U},
\]

\[
\mathfrak{R}\left \{  -\frac{1}{3}e^{-\frac{z}{3}}(z-3)\right \}  \geq0,\text{
\  \  \ }z\in \mathfrak{U},
\]

and
\[
\mathfrak{R}\left \{  -\frac{3e^{\frac{z}{3}}}{z-3}\right \}  \geq0.5,\text{
\  \  \  \ }z\in \mathfrak{U}.
\]
If we take $m=0,$ $\gamma=1,$ $\alpha=1$ and $\beta=\frac{1}{2}$ in Theorem
\ref{th1} and Theorem \ref{th2}, we obtain the following corollary.

\begin{corollary}
The following inequalities hold true:
\end{corollary}

\[
\mathfrak{R}\left \{  \sqrt{\frac{2}{z}}\sinh \sqrt{\frac{z}{2}}\right \}
\geq \frac{5}{7}\allowbreak \approx0.714\,29,\text{ \  \  \  \  \  \ }z\in
\mathfrak{U},
\]%
\[
\mathfrak{R}\left \{  \sqrt{\frac{z}{2}}\csc h\sqrt{\frac{z}{2}}\right \}
\geq \frac{7}{9}\approx0.777\,78,\text{ \  \  \  \  \  \  \ }z\in \mathfrak{U},
\]

\[
\mathfrak{R}\left \{  \frac{1}{2}\cosh \sqrt{\frac{z}{2}}+\frac{\sinh \sqrt
{\frac{z}{2}}}{\sqrt{2z}}\right \}  \geq \frac{1}{3}\approx0.333\,33,\text{
\  \  \ }z\in \mathfrak{U},
\]

and
\[
\mathfrak{R}\left \{  \frac{1}{\frac{1}{2}\cosh \sqrt{\frac{z}{2}}+\frac
{\sinh \sqrt{\frac{z}{2}}}{\sqrt{2z}}}\right \}  \geq \frac{3}{5}=0.6,\text{
\  \  \  \ }z\in \mathfrak{U}.
\]
If we take $m=0,$ $\gamma=1,$ $\alpha=1$ and $\beta=-\frac{1}{4}$ in Theorem
\ref{th1} and Theorem \ref{th2}, we obtain the following corollary.

\begin{corollary}
The following inequalities hold true:
\end{corollary}

\[
\mathfrak{R}\left \{  \frac{2}{\sqrt{z}}\sin \frac{\sqrt{z}}{2}\right \}
\geq \frac{13}{15}\allowbreak \approx0.866\,67,\text{ \  \  \  \  \  \ }%
z\in \mathfrak{U},
\]

\[
\mathfrak{R}\left \{  \frac{\sqrt{z}}{2}\csc \frac{\sqrt{z}}{2}\right \}
\geq \frac{15}{17}\approx0.882\,35,\text{ \  \  \  \  \  \  \ }z\in \mathfrak{U},
\]

\[
\mathfrak{R}\left \{  \frac{1}{2}\cos \frac{\sqrt{z}}{2}+\frac{\sin \frac
{\sqrt{z}}{2}}{\sqrt{z}}\right \}  \geq \frac{5}{7}\approx0.714\,29,\text{
\  \  \ }z\in \mathfrak{U},
\]

and
\[
\mathfrak{R}\left \{  \frac{1}{\frac{1}{2}\cos \frac{\sqrt{z}}{2}+\frac
{\sin \frac{\sqrt{z}}{2}}{\sqrt{z}}}\right \}  \geq \frac{7}{9}\approx
0.777\,78,\text{ \  \  \  \ }z\in \mathfrak{U}.
\]
If we take $m=0,$ $\gamma=1,$ $\alpha=1$ and $\beta=1$ in Theorem \ref{th1},
we obtain the following corollary.

\begin{corollary}
The following inequalities hold true:%
\[
\mathfrak{R}\left \{  \frac{\sinh \sqrt{z}}{\sqrt{z}}\right \}  \geq \frac{1}%
{3}\allowbreak \approx0.333\,33,\text{ \  \  \  \  \  \ }z\in \mathfrak{U},
\]

\end{corollary}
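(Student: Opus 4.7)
The plan is to obtain this corollary as a direct specialization of Theorem~\ref{th1}, equation~(\ref{t}). First I would verify that the hypotheses hold: with $\alpha=1$, $\gamma=1$, $|\beta|=1$, the condition $2(\gamma+\alpha)\geq 3|\beta|$ becomes $4\geq 3$, which is satisfied, and $\alpha\geq 0$, $\gamma\geq 1$ are immediate.

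Next I would identify the two sides of the inequality under the chosen parameters. From Remark~\ref{rem1}, $\mathbb{R}_{1,1,1}(z)=\sqrt{z}\,\sinh\sqrt{z}$, which expands as $z+\sum_{n=1}^{\infty}A_n z^{n+1}$ with $A_n=\Gamma(2)/\Gamma(2n+2)=1/(2n+1)!$, so the normalization in $\mathcal{A}$ is correct. For $m=0$ the partial sum is $(\mathbb{R}_{1,1,1})_{0}(z)=z$, hence
\[
\frac{\mathbb{R}_{1,1,1}(z)}{(\mathbb{R}_{1,1,1})_{0}(z)}=\frac{\sqrt{z}\,\sinh\sqrt{z}}{z}=\frac{\sinh\sqrt{z}}{\sqrt{z}}.
\]
The right-hand side of (\ref{t}) evaluates to
\[
\frac{2(\gamma+\alpha)-3|\beta|}{2(\gamma+\alpha)-|\beta|}=\frac{4-3}{4-1}=\frac{1}{3}.
\]
Combining these two observations with Theorem~\ref{th1} yields the claimed inequality.

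There is essentially no obstacle here, since the corollary is a plug-in. The only point that deserves a moment of care is checking that the series expansion of $\sqrt{z}\sinh\sqrt{z}$ coincides with the normalized form~(\ref{GT}) at the stated parameter values (so that the identification $\mathbb{R}_{1,1,1}(z)=\sqrt{z}\sinh\sqrt{z}$ is valid as an element of $\mathcal{A}$), and that the strict inequality in the hypothesis of Theorem~\ref{th1} is in fact written as a non-strict inequality $2(\gamma+\alpha)\geq 3|\beta|$ in its statement, so the boundary case $4=3+1$ of our parameters is admissible. Once these trivial checks are made, the conclusion $\mathfrak{R}\{\sinh\sqrt{z}/\sqrt{z}\}\geq 1/3$ for $z\in\mathfrak{U}$ follows immediately.
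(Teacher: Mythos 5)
Your proposal is correct and is exactly the paper's route: the corollary is obtained by specializing Theorem \ref{th1} (inequality (\ref{t})) with $m=0$, $\gamma=1$, $\alpha=1$, $\beta=1$, after noting $\mathbb{R}_{1,1,1}(z)=\sqrt{z}\sinh\sqrt{z}$ and $(\mathbb{R}_{1,1,1})_{0}(z)=z$, giving the bound $\frac{4-3}{4-1}=\frac{1}{3}$. Your verification of the coefficient identification $A_n=1/(2n+1)!$ and of the hypothesis $2(\gamma+\alpha)\geq 3|\beta|$ matches what the paper implicitly relies on.
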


and%

\[
\mathfrak{R}\left \{  \sqrt{z}\sec h\sqrt{z}\right \}  \geq \frac{3}%
{5}=0.6,\text{ \  \  \  \  \  \  \ }z\in \mathfrak{U}.
\]

\begin{remark}
Putting $m=0$ in inequality (\ref{kl}), we obtain $\mathfrak{R}\left \{
\mathbb{R}_{\alpha,\beta,\gamma}^{\prime}(z)\right \}  >0$. In view of
Noshiro-Warschawski Theorem (see \cite{go}), we have that the normalized
Rabotnov function is univalent in $\mathfrak{U}$ for $\gamma+\alpha
\geq3\left \vert \beta \right \vert .$
\end{remark}

\section{Conclusions}

Recently, some researchers have studied on partial sums of special functions
like, the normalized Struve functions, the normalized Dini functions, the
normalized Wright functions and the normalized Miller-Ross function. In this
present paper, we have introduced a new generalization of the Rabotnov
function $R_{\alpha,\beta,\gamma}$ given by (\ref{gg}). Furthermore, we
determined lower bounds for the normalized Rabotnov function $\mathbb{R}%
_{\alpha,\beta,\gamma}$ to its sequence of partial sums. Several interesting
examples of the main results are also considered.

\end{document}